\documentclass[11pt]{article}
\usepackage{layout,verbatim,amssymb,amsmath,setspace,cite}
\usepackage{amsthm,authblk}      
\usepackage{graphicx}
\usepackage[dvipsnames]{xcolor}
\usepackage{color,latexsym,amsmath,amsfonts,amssymb}
\usepackage[utf8]{inputenc}
\usepackage{layout,verbatim,amssymb,amsmath,setspace,cite}

\newcommand{\vG}{\varGamma}
\newcommand{\ve}{\varepsilon}

\newcommand{\ov}{\overline}

\def\N{{\mathbb{N}}}
\def\R{{\mathbb{R}}}

\def\mcL{{\mathcal L}}

\def\mcI{{\mathcal I}}

\def\be{\begin{equation}}
\def\ee{\end{equation}}
\def\ba*{\begin{eqnarray*}}
\def\ea*{\end{eqnarray*}}

\def\vPh{\varPhi}
\providecommand{\msc}[1]{\textbf{{M.S.C.:}} #1}

\newtheorem{thm}{Theorem}[section]{\bfseries}{\itshape}
\newtheorem{cor}[thm]{Corollary}{\bfseries}{\itshape}
\newtheorem{lem}[thm]{Lemma}{\bfseries}{\itshape}
{\bfseries}{\itshape}
\newtheorem{prop}[thm]{Proposition}{\bfseries}{\itshape}
\newtheorem{deff}[thm]{Definition}{\bfseries}{\itshape}%
{\bfseries}{\itshape}
\newtheorem{rem}[thm]{Remark}{\bfseries}{\itshape}
\newtheorem{ex}[thm]{Example}{\bfseries}{\itshape}

\begin{document}

\title{Some new results on integration for multifunction\thanks{This research was supported by
the Grant  Prot. N.  U2015/001379 of GNAMPA -- INDAM (Italy);
by University of Perugia -- Dept. of Mathematics and Computer Sciences -- Grant Nr 2010.011.0403
and by University of Palermo (Italy).}
}

\author[Candeloro, Di Piazza, Musia{\l}, Sambucini]{Domenico Candeloro, Luisa Di Piazza, Kazimierz Musia{\l}, Anna Rita Sambucini }
\date{}

\maketitle

\begin{abstract}
It has been proven in \cite{dm2} and \cite{dm4} that each Henstock-Kurzweil-Pettis integrable multifunction
with weakly compact values can be represented as a sum of one of its selections  and a Pettis integrable multifunction.
We prove here that if the initial multifunction is also Bochner measurable and has absolutely continuous variational
measure of its integral, then it is a sum of a strongly measurable selection and of a variationally Henstock integrable multifunction
that is also Birkhoff integrable (Theorem \ref{BOH}).
\end{abstract}
{\bf keywords} Multifunction, set-valued Pettis integral, set-valued variationally Henstock and Birkhoff integrals, selection\\
\msc{   28B20 ,  26E25 , 26A39 , 28B05 , 46G10 , 54C60 , 54C65}

\section{Introduction}\label{intro}

Integration of vector valued functions is strongly motivated by general
problems of modern analysis including control theory, economics and differential inclusions.
In many situations it is enough to use the
well-known Bochner and Pettis  integrals involved with the classical Lebesgue theory.
In the last decades however gauge (non--absolute) integrals have been also considered
\cite{BCS,BS2011,cdpms2016, lms, ncm, CASCALES2, crbb, DiP,dm11, dm, f1994a, fm, lm,  mpot}
after the pioneering studies of  G. Birkhoff, J. Kurzweil, R. Henstock and  E. J. McShane.
The Birkhoff integral was introduced in \cite{birkhoff} and recently
investigated in \cite{ccgs,ckr1, ckr2009, ma, mpot}.
 B.  Cascales, V. M. Kadets, M. Potyrala, J. Rodriguez,  and other authors considered the unconditional
Riemann--Lebesgue multivalued integral.\\
This article is organized in the following manner. In Section \ref{two} we give  some preliminaries
and the definitions.
In Section \ref{three} a new decomposition theorem for Bochner measurable and weakly compact valued
 Henstock-Kurzweil-Pettis integrable multifunction $\Gamma$ with absolutely continuous variational
measure of its integral is obtained, involving Birkhoff and variationally Henstock integrability of the multifunction $G$ such that $\Gamma = f + G$.
It is also shown that the conditions in Theorem \ref{BOH} do not imply, in general, the variational integrability of the multifunction $\Gamma$ and an example is given (Example \ref{exc0}). \\
Finally, in case of strongly  measurable (multi)functions,
a characterization of the Birkhoff integrability is given using a kind of Birkhoff strong property (see Definition \ref{sBi}).

\section{Preliminary facts}\label{two}
Throughout $[0,1]$ is the unit interval of the real line equipped
with the usual topology and  Lebesgue measure $\lambda$,
$\mathcal{L}$ denotes the family of all Lebesgue measurable subsets of $[0,1]$, and
 $\mcI$
is the collection of all closed subintervals of $[0,1]$: if $I\in  \mcI$
then its Lebesgue  measure will be denoted by $|I|$.

$X$ is an arbitrary  Banach space with its dual $X{}^*$.
  The closed unit ball of $X{}^*$ is denoted by
  $B(X{}^*)$. $cwk(X)$ is    the family of all non-empty  convex weakly compact subsets of $X$ and  $ck(X)$ is
 the family of all  compact members of $cwk(X)$ .  We consider on $cwk(X)$ the
   Minkowski addition ($A+B :\,=\{a+b:a\in  A,\,b\in  B\}$) and the
   standard multiplication by scalars.
$d{}_H$ is the Hausdorff distance in $cwk(X)$ and $cwk(X)$ with this metric is
   a complete metric space.

For every $A\in cwk(X)$,   $\|A\|:=d_H(A,\{0\})=\sup\{\|x\|\colon x\in {A}\}$.\\
     For every $C \in  cwk(X)$ the {\it support
  function of} $C$ is denoted by $s( \cdot, C)$ and defined on $X{}^*$ by $s(x{}^*, C) = \sup \{ \langle x{}^*,x \rangle : \ x
  \in  C\}$, for each $x{}^* \in  X{}^*$. \\
  A map $\vG:[0,1]\to 2{}^X\setminus\{\emptyset\}$
  (= non-empty subsets of $X$) is called a {\it multifunction}.\\
 A multifunction $\vG:[0,1]\to cwk(X)$ is said to be {\it scalarly measurable} if for every $ x{}^* \in  X{}^*$, the map
  $s(x{}^*,\vG(\cdot))$ is measurable.\\ $\vG$ is said to be {\it Bochner measurable}  if there exists a sequence of simple multifunctions $\vG_n: [0,1] \to cwk(X)$ such that
  $$\lim_{n\rightarrow \infty}d_H(\vG_n(t),\vG(t))=0$$ for almost all $t \in [0,1]$.\\
  A function $f:[0,1]\to X$ is called a {\it selection of} $\vG$ if $f(t) \in \vG(t)$,   for every $t\in  [0,1]$.
\\
 A {\it partition} ${\mathcal P}$ {\it in} $[0,1]$ is a collection $\{(I{}_1,t{}_1),$ $ \dots,(I{}_p,t{}_p) \}$,
  where $I{}_1,\dots,I{}_p$ are nonoverlapping subintervals of $[0,1]$, $t{}_i$ is a point of $[0,1]$, $i=1,\dots, p$.\\
 If $\cup^p_{i=1} I{}_i=[0,1]$, then  ${\mathcal P}$ is {\it a partition of} $[0,1]$. If   $t_i$ is a point of $I{}_i$, $i=1,\dots,p$,  we say that ${\mathcal P}$ is a {\it Perron partition of}
  $[0,1]$.\\
 A {\it gauge} on $[0,1]$ is a positive function on $[0,1]$. For a given gauge $\delta$ on $[0,1]$,
  we say that a partition $\{(I{}_1,t{}_1), \dots,(I{}_p,t{}_p)\}$ is $\delta$-{\it fine} if
  $I{}_i\subset(t{}_i-\delta(t{}_i),t{}_i+\delta(t{}_i))$, $i=1,\dots,p$.\\

We recall that an interval multifunction $\Phi:{\mathcal I} \rightarrow cwk(X)$ is said to be \textit{finitely additive} if for every non-overlapping intervals $I{}_1, I{}_2 \in \mathcal{I}$ such that $I{}_1 \cup I{}_2 \in {\mathcal I}$ we have $\Phi(I{}_1 \cup I{}_2)=\Phi(I{}_1) + \Phi(I{}_2)$.\\
A multifunction $M:\mathcal{L} \rightarrow cwk(X)$ is said to be a $d_H$\textit{-multimeasure} if for every sequence $(A_n)_{n\geq 1}\subset \mathcal{L}$ of pairwise disjoint sets with $A=\bigcup_{n\geq 1}A_n$, we have $d_H(M(A),\sum_{k=1}^n M(A_k))\rightarrow 0$ as $n\rightarrow +\infty$.\\
 It is well known that if $M$ is $cwk(X)$-valued, then $M$ is a $d{}_H$-multimeasure if and only if it is a multimeasure, i.e.  if for every $x{}^*\in X{}^*$, the map
$A\mapsto s(x{}^*,M(A))$ is a real-valued measure (see \cite[Theorem 8.4.10]{hp}).\\
We say that the multimeasure $M:\mathcal{L} \rightarrow cwk(X)$ is $\lambda$\textit{-continuous} and we write $M\ll\lambda$, if $\lambda(A)=0$ yields $M(A)=\{0\}$.\\

    \begin{deff} \rm 
    A multifunction $\vG:[0,1]\to cwk(X)$ is said to be {\it Birkhoff}
   integrable on $[0,1]$,
   if there exists a non empty closed convex set  $\vPh{}_{\vG}([0,1]) \in cwk(X)$
 with the following property: for every $\varepsilon > 0$  there is a countable
partition $\Pi{}_0$  of $[0,1]$  in $\mathcal{L}$ such that for every countable partition $\Pi = (A{}_n){}_n$
 of $[0,1]$  in $\mathcal{L}$
finer than $\Pi_0$ and any choice $T = (t{}_n){}_n$  in $A{}_n$, the series
$\sum_n\lambda(A{}_n) \vG(t{}_n)$
 is unconditionally
convergent (in the sense of the Hausdorff metric) and
\begin{eqnarray}\label{e14-a}
d{}_H \left(\vPh{}_{\vG}([0,1]),\sum_n \vG(t{}_n) \lambda(A{}_n) \right)<\ve\,.
\end{eqnarray}
\end{deff}
    \begin{deff} \rm
 A multifunction $\vG:[0,1]\to cwk(X)$ is said to be {\it Henstock} (resp. {\it McShane})
   integrable on $[0,1]$,  if there exists a non empty closed convex set  $\vPh{}_{\vG}([0,1])\subset{X}$
    with the property that for every $\varepsilon > 0$ there exists a gauge $\delta$ on $[0,1]$
such that for each $\delta$--fine Perron partition (resp. partition)
   $\{(I{}_1,t{}_1), \dots,(I{}_p,t{}_p)\}$ of
   $[0,1]$, we have
\begin{eqnarray}\label{e14}
d{}_H \left(\vPh{}_{\vG}([0,1]),\sum_{i=1}^p\vG(t{}_i)|I{}_i|\right)<\ve\,.
\end{eqnarray}
A multifunction $\vG:[0,1]\to cwk(X)$ is said to be {\it Henstock} (resp.  McShane) integrable on
 $I\in \mcI$  (resp.  $E\in \mcL$ ) if
$\vG 1{}_I$
is Henstock (resp. $\vG 1{}_E$ is  McShane) integrable on $[0,1]$.
\end{deff}
If $X=\R$ and $\vG$ is a function, we speak of Henstock-Kurzweil integrability instead of Henstock one.
 Each of these integrals turns out to be an additive mapping, on $\mathcal{I}$ or $\mcL$ respectively, and is called {\em primitive}, or also {\em integral measure} of the mapping $\vG$.
From the definition and the completeness of the Hausdorff metric the $cwk(X)$-valued  integrals defined before have  weakly compact values.

 \begin{deff} \rm
 A multifunction $\vG:[0,1]\to cwk(X)$ is said to be {\it variationally Henstock} integrable on $[0,1]$,  if  there exists an additive mapping $\Phi_{\vG}:\mathcal{I}\to cwk(X)$
    with the property that for every $\varepsilon > 0$ there exists a gauge $\delta$ on $[0,1]$
such that for each $\delta$--fine Perron partition
   $\{(I{}_1,t{}_1), \dots,(I{}_p,t{}_p)\}$ of
   $[0,1]$, we have
\begin{eqnarray}\label{e14bis}
\sum_{i=1}^p d{}_H \left(\vPh{}_{\vG}(I_i),\vG(t{}_i)|I{}_i|\right)<\ve\,.
\end{eqnarray}
\end{deff}
 There is a large literature concerning
{\color{blue}} Pettis  and   Henstock-Kurzweil-Pettis  integral for functions and  multifunctions; we refer the
reader  to \cite{mu,mu3,mu4,mu8,ckr1,dm,dm2, pettis}.\\

A useful tool to study the $cwk(X)$-valued multifunctions is the R{\aa}dstr\"{o}m embedding (see for example \cite{l1}).
given by $i(A):=s(\cdot, A)$. It  satisfies the following properties:
\begin{itemize}
\item[1)] $i(\alpha A+ \beta C) = \alpha i(A) + \beta i(C)$ for every $A,C\in  cwk(X),\alpha, \beta \in  \mathbb{R}{}^+;$
\item[2)] $d_H(A,C)=\|i(A)-i(C)\|_{\infty},\quad A,C\in  cwk(X)$;
\item[3)] $i(cwk(X))=\ov{i(cwk(X))}$  (norm\ closure).
\end{itemize}

Observe that it follows directly from the definitions that
a multifunction $\vG:[0,1]\to{cwk(X)}$ is Birkoff (resp. Henstock,  McShane, variationally Henstock) integrable if and only if $i(\vG)$ is integrable in the same sense.

 \begin{deff}\rm \label{def2.4}
	A multifunction $\vG:[0,1]\to cwk(X)$ is said to be {\em Pettis} (resp.  {\em   Henstock-Kurzweil-Pettis integrable} (in short {\em(HKP)}-integrable)) if $s(x^*,\vG)$ is integrable (resp.
	Henstock-Kurzweil integrable)  for each $x^*\in X^*$, and for each $A \in \mathcal{L}$ (resp.  $I\in \mathcal{I}$)  there exists an element $w_A\in cwk(X)$ (resp. $w_I\in cwk(X)$) such that  $x^*(w_A)=\int_A s(x^*,\vG(t)) dt$ (resp. $x^*(w_I)=(H)\int_I s(x^*,\vG(t)) dt$) holds, for each $x^*\in X^*$. 
\end{deff}

\noindent
It follows from the classical properties of the Henstock-Kurzweil integral  that the primitives of Henstock or $(HKP)$ integrable multifunctions are interval multimeasures, while   the primitives of Pettis or Birkoff integrable multifunctions are multimeasures (see \cite[Theorem 4.1(i)]{ckr2009}).
\\

In particular we recall that, for a Pettis integrable mapping $G:[0,1]\to cwk(X)$, its integral $\varPhi_G$ is a countably additive multimeasure on the $\sigma$-algebra $\mathcal{L}$ (see \cite[Theorem 4.1]{ckr2009}) that is  absolutely continuous with respect to $\lambda$. As also observed in  \cite[Section 3]{ckr2009}, this means that the {\em embedded} measure $i(\varPhi_G)$ is a countably additive measure with values in  $l_{\infty}(B(X{}^*))$.
\\

For the definition of  the variational  measure $V_{\Phi}$ associated to a finitely additive interval measure $\Phi: {\mathcal I}\rightarrow \R$ we refer the reader to \cite{BDpM2, dpp}. In particular, we recall that the variational measure $V_{\phi}$ of the primitive $\phi$ of a variationally Henstock integrable mapping is a (possibly unbounded) $\lambda$-continuous measure on $\mcL$: see \cite[Proposition 3.3.1]{porcello}.

\section{Variationally Henstock integrable selections}\label{three}

Concerning  existence of selections of  gauge integrable mappings with values in $cwk(X)$, several results have been obtained recently  in
\cite{dm,dm2,BCS,cdpms2016,lms,mpot}.
The purpose of this paper  is to revisit the last results, trying to generalize it to a more general case.
We begin with some useful Lemmas.

\begin{lem}
Let $\vG:[0,1]\to cwk(X)$ be any $(HKP)$-integrable multifunction, and let $f:[0,1]\to X$ be any $(HKP)$-integrable selection of $\vG$.
Then, if $\phi$ and $\Phi$ denote respectively the primitives of $f$ and $\vG$, it is
\[\phi(I)\in \Phi(I)\]
for every subinterval $I\subset [0,1]$.
\end{lem}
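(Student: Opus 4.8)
The plan is to reduce the set-membership statement $\phi(I)\in\Phi(I)$ to a family of scalar inequalities, tested against the support functions, and then to exploit the defining scalar relations of the $(HKP)$-integral together with the pointwise inequality supplied by the selection property.

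First I would recall the standard criterion for membership in a closed convex set. Since $\Phi(I)\in cwk(X)$ is convex and weakly compact, it is weakly closed, hence norm closed; therefore the Hahn--Banach separation theorem yields that a point $y\in X$ belongs to $\Phi(I)$ if and only if $\langle x^*,y\rangle\le s(x^*,\Phi(I))$ for every $x^*\in X^*$. Thus it suffices to establish this inequality for $y=\phi(I)$. Next I would write down the two scalar identities coming from Definition \ref{def2.4}. Applying it to the singleton-valued multifunction $t\mapsto\{f(t)\}$, for which $s(x^*,\{f(t)\})=\langle x^*,f(t)\rangle$, and to $\vG$, one obtains, for each $x^*\in X^*$,
\[
\langle x^*,\phi(I)\rangle=(HK)\int_I \langle x^*,f(t)\rangle\,dt,
\qquad
s(x^*,\Phi(I))=(HK)\int_I s(x^*,\vG(t))\,dt .
\]

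The key step is then the pointwise inequality furnished by the selection property. Since $f(t)\in\vG(t)$ for every $t$, the very definition of the support function gives $\langle x^*,f(t)\rangle\le s(x^*,\vG(t))$ for all $t\in[0,1]$ and all $x^*\in X^*$. Both $s(x^*,\vG(\cdot))$ and $\langle x^*,f(\cdot)\rangle$ are Henstock--Kurzweil integrable on $I$, so by linearity their difference is an $HK$-integrable function that is everywhere nonnegative; as every Riemann sum of a nonnegative function is nonnegative, its $HK$-integral over $I$ is $\ge 0$. Integrating the inequality over $I$ thus gives $\langle x^*,\phi(I)\rangle\le s(x^*,\Phi(I))$. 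Since $x^*$ was arbitrary, the separation criterion recalled above forces $\phi(I)\in\Phi(I)$, and this holds for every $I\in\mcI$, completing the argument.

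I do not expect a genuine obstacle: the whole proof is a routine ``test against support functions'' reduction. The two places needing a little care are (a) checking that the weakly compact convex set $\Phi(I)$ is norm closed, so that the Hahn--Banach separation criterion legitimately applies, and (b) the monotonicity used for the $HK$-integral, which is not automatic for gauge integrals in general but is valid here because the relevant integrand $s(x^*,\vG(\cdot))-\langle x^*,f(\cdot)\rangle$ is nonnegative.
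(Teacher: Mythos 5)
Your proof is correct and is essentially the same argument the paper relies on: the paper simply defers to the proof of \cite[Proposition 2.7]{cdpms2016}, which is exactly this reduction to support functionals --- the pointwise inequality $\langle x^*,f(t)\rangle\le s(x^*,\vG(t))$, positivity/linearity of the Henstock--Kurzweil integral, and Hahn--Banach separation for the closed convex set $\Phi(I)$. The only cosmetic difference is that the cited proof is phrased by contradiction (assuming $\phi(I)\notin\Phi(I)$ and separating), whereas you argue directly; this is immaterial.
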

\begin{proof}
It can be deduced as in the proof of \cite[Proposition 2.7]{cdpms2016}.
\end{proof}

\begin{lem}\label{misureselezioni}
Let $\vG:[0,1]\to cwk(X)$ be any $(HKP)$-integrable multifunction, and let $f$ be any $(HKP)$-integrable selection of $\vG$. If the variational measure $V_{\Phi}$ associated to the primitive  $\Phi$ of $\vG$ is absolutely continuous with respect to the Lebesgue measure $\lambda$, the same holds for the variational measure of the primitive $\phi$ of   $\,f$.
In particular this holds whenever $\vG$ is variationally Henstock integrable.
\end{lem}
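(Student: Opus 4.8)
=== PROOF PROPOSAL ===

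\textbf{The approach.} The plan is to control the variational measure $V_\phi$ of the selection's primitive by the variational measure $V_\Phi$ of the multifunction's primitive, using the containment $\phi(I)\in\Phi(I)$ from the previous Lemma together with the fact that Hausdorff distance from a singleton dominates the norm of a point lying in the set. The key elementary inequality I would exploit is this: if $\phi(I)\in\Phi(I)$, then since $\phi(I)$ is a single point, $\|\phi(I)\|=d_H(\{\phi(I)\},\{0\})\le \|\Phi(I)\|=d_H(\Phi(I),\{0\})$. This lets me pass from the point-valued increments $\phi(I)$ to the set-valued increments $\Phi(I)$ and ultimately to the variation of $V_\Phi$.

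\textbf{Key steps.} First I would recall the definition of the variational measures. For the scalar (point-valued) primitive $\phi:\mathcal I\to X$, the variational measure $V_\phi$ is defined on a set $E\in\mcL$ by taking, over all gauges $\delta$ and all $\delta$-fine Perron partitions with tags in $E$, the supremum of $\sum_i\|\phi(I_i)\|$ (then passing to the infimum over gauges). The analogous definition via the R{\aa}dstr\"{o}m embedding gives $V_\Phi$ in terms of $\sum_i d_H(\Phi(I_i),\{0\})=\sum_i\|i(\Phi(I_i))\|_\infty$. Second, I would invoke the preceding Lemma to obtain $\phi(I_i)\in\Phi(I_i)$ for each interval $I_i$ of any such partition. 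Third, applying the elementary inequality above termwise yields
\begin{equation}
\sum_i \|\phi(I_i)\| \;\le\; \sum_i d_H(\Phi(I_i),\{0\}),
\end{equation}
so that taking suprema over $\delta$-fine Perron partitions with tags in $E$ and then infima over gauges $\delta$ gives $V_\phi(E)\le V_\Phi(E)$ for every $E\in\mcL$. Fourth, the hypothesis $V_\Phi\ll\lambda$ combined with this domination immediately forces $V_\phi\ll\lambda$: whenever $\lambda(E)=0$ we have $V_\Phi(E)=0$, hence $V_\phi(E)=0$. Finally, for the concluding sentence, if $\vG$ is variationally Henstock integrable then by the result of \cite[Proposition 3.3.1]{porcello} recalled in the preliminaries its primitive $\Phi$ has $\lambda$-continuous variational measure, so the hypothesis $V_\Phi\ll\lambda$ is automatically satisfied and the first part applies.

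\textbf{The main obstacle.} The genuinely delicate point is the interchange of the gauge/partition quantifiers in the definition of variational measure. The domination inequality is clean termwise, but $V_\phi$ and $V_\Phi$ are each defined through an infimum over gauges of a supremum over partitions, and these infima may be attained along different gauges for $\phi$ and for $\Phi$. I would handle this by working with a common gauge: for a fixed gauge $\delta$, the inequality $\sum_i\|\phi(I_i)\|\le\sum_i d_H(\Phi(I_i),\{0\})$ holds for \emph{every} $\delta$-fine Perron partition simultaneously, so the supremum over such partitions is monotone and the inequality survives the supremum; then taking the infimum over $\delta$ on both sides preserves it, since decreasing a common $\delta$ can only shrink both suprema. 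Care must also be taken that the variational measure of a point-valued function is computed with the ordinary norm while that of the multifunction uses $d_H$; the identification $\|\phi(I)\|=d_H(\{\phi(I)\},\{0\})$ bridges the two, and monotonicity of $d_H$ under set inclusion relative to $\{0\}$ (equivalently, $\|a\|\le\|A\|$ for $a\in A$) closes the gap.
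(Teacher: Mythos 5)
Your proposal is correct and follows essentially the same route as the paper: the paper's proof likewise invokes the previous Lemma to get $\phi(I)\in\Phi(I)$, deduces the pointwise domination $V_{\phi}\leq V_{\Phi}$ (which your termwise inequality $\|\phi(I)\|\leq d_H(\Phi(I),\{0\})$ and the sup/inf monotonicity argument merely spell out in detail), and transfers $\lambda$-continuity from $V_{\Phi}$ to $V_{\phi}$. Your handling of the final sentence via \cite[Proposition 3.3.1]{porcello} also matches what the paper recalls in its preliminaries.
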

\begin{proof}
From the previous Lemma we have that $\phi(I)\in \Phi(I)$ for every interval $I\subset [0,1]$. Then $V_{\phi}\leq V_{\Phi}$. Since  $V_{\Phi}$ is $\lambda$-continuous the same holds for $V_{\phi}$, of course.
\end{proof}
A crucial tool is the following.

\begin{thm}\label{ex41}
Let $\vG:[0,1]\to cwk(X)$ be any McShane
integrable mapping. Then $\vG$ is variationally Henstock integrable if and only if it is Bochner measurable and the variational measure of its integral is $\lambda$-continuous.
\end{thm}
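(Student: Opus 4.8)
The plan is to prove the two implications separately, using the Rådström embedding $i$ to reduce everything to the single-valued (vector-valued) setting, where the analogous characterization of variational Henstock integrability is known. The key observation is that by property (2) of the embedding, $d_H$-distances become $\|\cdot\|_\infty$-distances in $\ell_\infty(B(X^*))$, so $\vG$ is McShane (resp.\ variationally Henstock) integrable if and only if $i(\vG)$ is, and the primitive of $i(\vG)$ is $i(\Phi_{\vG})$; moreover the variational measure $V_{\Phi_{\vG}}$ coincides with the variational measure of $i(\Phi_{\vG})$ as a real-valued variation, because $d_H(\Phi_{\vG}(I),\{0\})=\|i(\Phi_{\vG})(I)\|_\infty$ and variation is computed from these norms. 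Thus it suffices to prove: a McShane integrable function $g=i(\vG):[0,1]\to \ell_\infty(B(X^*))$ is variationally Henstock integrable iff it is Bochner (strongly) measurable and its primitive has $\lambda$-continuous variational measure. One must be slightly careful that Bochner measurability of $\vG$ in the $d_H$ sense corresponds exactly to strong measurability of $i(\vG)$: this follows from property (2) since $d_H(\vG_n(t),\vG(t))\to 0$ translates to $\|i(\vG_n)(t)-i(\vG)(t)\|_\infty\to 0$, and the $i(\vG_n)$ are simple functions.

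For the forward implication, assume $\vG$ is variationally Henstock integrable. Then $g=i(\vG)$ is variationally Henstock integrable as a vector-valued function, so by the classical theory its variational measure $V_{i(\Phi_{\vG})}$ is $\lambda$-continuous (this is exactly \cite[Proposition 3.3.1]{porcello}, cited in the preliminaries, applied scalarly or directly in the vector case). Translating back, $V_{\Phi_{\vG}}$ is $\lambda$-continuous. Strong measurability of a variationally Henstock integrable function is standard (a variationally Henstock integrable function is the a.e.\ derivative of its primitive and hence strongly measurable), giving Bochner measurability of $\vG$. This direction is essentially bookkeeping through the embedding.

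The substantive direction is the converse. First I would assume $\vG$ is McShane integrable, Bochner measurable, and $V_{\Phi_{\vG}}$ is $\lambda$-continuous, and pass to $g=i(\vG)$, which is then McShane integrable, strongly measurable, with $\lambda$-continuous variational measure of its primitive $\nu:=i(\Phi_{\vG})$. The goal is to produce, for each $\varepsilon>0$, a gauge $\delta$ making every $\delta$-fine Perron partition satisfy the variational Henstock estimate $\sum_i\|\nu(I_i)-g(t_i)|I_i|\|_\infty<\varepsilon$. Since $g$ is McShane integrable it is Pettis integrable and $\nu\ll\lambda$ as a vector measure; combined with $V_\nu\ll\lambda$, the variation $V_\nu$ is a finite (or $\sigma$-finite) nonatomic measure absolutely continuous with respect to $\lambda$. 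The strategy is to use Vitali--Carathéodory / Lusin-type arguments: by strong measurability and Egorov, $g$ is close to a simple function off a small set, and on the complement one controls $\|\nu(I)-g(t)|I|\|$ by the oscillation of $g$ near $t$ (handled by the gauge via continuity on the big set) plus the variation $V_\nu$ of the primitive over the exceptional set. The $\lambda$-continuity of $V_\nu$ is precisely what lets one choose the exceptional set of small $\lambda$-measure so that its $V_\nu$-mass is also small, controlling the contribution of the ``bad'' tags.

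The hard part will be the converse, specifically patching together the local estimate (where $g$ is well-approximated by its value $g(t_i)$ on small intervals, a derivative-type statement valid a.e.\ by strong measurability) with the global control over the exceptional set where no such approximation holds; there the only available bound is through $V_\nu$, and the absolute continuity $V_\nu\ll\lambda$ is indispensable for making that bound small. I expect the cleanest route is to invoke an already-established vector-valued characterization of variational Henstock integrability for strongly measurable McShane integrable functions (of the same form as this theorem, proved for single-valued maps in the cited literature such as \cite{DiP,dm11}), and then simply transport it through the Rådström embedding using properties (1)--(3) above; the only genuine verification needed is that the embedding preserves each hypothesis and conclusion, which the properties of $i$ supply directly.
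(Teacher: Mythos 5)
Your proposal takes essentially the same route as the paper: the paper's proof is precisely the reduction you describe --- $\vG$ McShane integrable implies $i(\vG)$ is McShane, hence Pettis, integrable, and then the known single-valued characterization (a Pettis integrable, strongly measurable function with $\lambda$-continuous variational measure of its primitive is variationally Henstock integrable, and conversely) is transported through the R{\aa}dstr\"{o}m embedding. The only adjustment is the reference: the vector-valued result you gesture at is \cite[Lemma 4.1]{BDpM2} rather than \cite{DiP} or \cite{dm11}, and once it is invoked your sketched Egorov/Lusin argument for the converse becomes unnecessary.
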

\begin{proof} If $\vG$ is McShane integrable, then also $i\vG$ is McShane, hence Pettis, integrable and we may apply \cite[Lemma 4.1]{BDpM2}.
\end{proof}

Here is the new decomposition theorem.
\begin{thm}\label{BOH}
Let $\vG:[0,1]\to cwk(X)$ satisfy the following conditions:
\begin{description}
\item[\rm \ref{BOH}.1)]\ $\vG$ is Bochner measurable;
\item[\rm \ref{BOH}.2)]\  $\vG$ is $HKP$-integrable;
\item[\rm \ref{BOH}.3)]\  the variational measure associated to the primitive of $\vG$ is $\lambda$-continuous.
\end{description}

Then $\vG$ can be decomposed as the sum $\vG=f+G$, where $f$ is any strongly measurable selection of $\vG$ and $G$ is a variationally Henstock and Birkhoff integrable multifunction.
\end{thm}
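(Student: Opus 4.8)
The plan is to fix once and for all an arbitrary strongly measurable selection $f$ of $\Gamma$ (such a selection exists because, $\Gamma$ being Bochner measurable, it is the a.e.\ $d_H$-limit of simple multifunctions, from which a strongly measurable selection is produced in the usual way) and to define $G(t):=\Gamma(t)-f(t)$, i.e.\ the pointwise translate of $\Gamma(t)$ by $-f(t)$. Since $f(t)\in\Gamma(t)$ we get $0\in G(t)$ and the identity $\Gamma=f+G$ holds pointwise as a Minkowski sum, so the whole problem reduces to proving that $G$ is simultaneously variationally Henstock and Birkhoff integrable. First I would record two elementary structural facts: $G$ is again Bochner measurable (translation is a $d_H$-isometry, so simple approximants of $\Gamma$ and of $f$ yield simple approximants of $G$), and, for every $x^*\in X^*$,
\[ s(x^*,G(t))=s(x^*,\Gamma(t))-\langle x^*,f(t)\rangle\ge 0, \]
the inequality coming precisely from $0\in G(t)$.

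Next I would settle the scalar integrability. Each $\langle x^*,f\rangle$ is squeezed between the two Henstock--Kurzweil integrable functions $-s(-x^*,\Gamma)$ and $s(x^*,\Gamma)$; being measurable (as $f$ is strongly measurable), it is itself Henstock--Kurzweil integrable, its difference with the lower envelope being a nonnegative HK-integrable, hence Lebesgue integrable, function. It then follows in the standard way that $f$ is $(HKP)$-integrable, so Lemma~\ref{misureselezioni} applies and the variational measure $V_\phi$ of the primitive $\phi$ of $f$ is $\lambda$-continuous. Writing $\Psi=\Phi-\phi$ for the primitive of $G$ and using subadditivity of the variation together with the hypothesis that $V_\Phi\ll\lambda$, I obtain $V_\Psi\le V_\Phi+V_\phi\ll\lambda$. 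The decisive gain is that $s(x^*,G)\ge 0$ is HK-integrable, hence \emph{Lebesgue} integrable: thus $G$ is scalarly Lebesgue integrable, not merely scalarly HK-integrable.

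Having a Bochner measurable, scalarly Lebesgue integrable multifunction, I would first produce the set integrals $w_A\in cwk(X)$ for $A\in\mathcal{L}$, showing that $G$ is Pettis integrable; here the R{\aa}dstr\"om embedding is the right tool, since $i(G)$ is strongly measurable and scalarly integrable and its Dunford integral, by strong measurability, lands back in the separable part of $\overline{i(cwk(X))}=i(cwk(X))$. I would then invoke the known fact that for a strongly measurable (hence Bochner measurable) map the Pettis, McShane and Birkhoff integrals coincide, which gives at once that $G$ is Birkhoff integrable and, in particular, McShane integrable. Finally, $G$ being McShane integrable, Bochner measurable and having $\lambda$-continuous variational measure $V_\Psi$, Theorem~\ref{ex41} yields exactly that $G$ is variationally Henstock integrable, completing the decomposition $\Gamma=f+G$.

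I expect the genuine obstacle to lie in the third paragraph: converting the purely scalar data (nonnegative, Lebesgue integrable support functions) together with Bochner measurability into the vector-valued McShane/Birkhoff integrability of $G$, that is, the existence of the set integrals $w_A$ and the uniform control over refining countable partitions demanded by the Birkhoff definition. Everything else is bookkeeping with the embedding $i$ and the already available Theorem~\ref{ex41} and Lemma~\ref{misureselezioni}; the nonnegativity $s(x^*,G)\ge 0$, which is what upgrades HK-integrability to Lebesgue integrability, is the structural feature that makes this passage go through.
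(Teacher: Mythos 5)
Your proposal follows the same skeleton as the paper's proof: the same decomposition $G(t)=\vG(t)-f(t)$ for a strongly measurable selection $f$, the same use of Lemma~\ref{misureselezioni} plus subadditivity of the variation to get $V_\Psi\ll\lambda$, Theorem~\ref{ex41} to obtain variational Henstock integrability, and the nonnegativity of $s(x^*,G)$ to reach Birkhoff integrability. The difference is that the paper obtains the two pivotal facts --- $f$ is $HKP$-integrable and $G$ is Pettis integrable --- by citing \cite[Theorem 1]{dm2}, whereas you try to derive them directly; and it is exactly there, in the step you yourself flag as ``the genuine obstacle,'' that your argument breaks down rather than merely being incomplete.

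The flawed claim is: ``$i(G)$ is strongly measurable and scalarly integrable and its Dunford integral, by strong measurability, lands back in $i(cwk(X))$.'' Two problems. First, what you actually control are the evaluation functionals $e_{x^*}$, $x^*\in B(X^*)$, while Dunford/Pettis integrability of $i(G)$ in $l_\infty(B(X^*))$ involves the \emph{whole} dual of that space; note that the paper's transfer statement (``$\vG$ is integrable iff $i(\vG)$ is'') is asserted only for the $d_H$-defined integrals (Birkhoff, Henstock, McShane, variationally Henstock), deliberately not for Pettis. Second, and decisively, strong measurability plus scalar integrability does not force integrals into the space: the classical function $f=\sum_n 2^n\chi_{A_n}e_n$ in $c_0$, with $\lambda(A_n)=2^{-n}$, is strongly measurable and scalarly integrable but its Dunford integral is $(1,1,\dots)\notin c_0$. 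The same failure persists for multifunctions even with your extra structural feature $0\in G(t)$: take $G(t)=\mathrm{conv}\{0,f(t)\}$ with this $f$. Then $G$ is Bochner measurable, $s(x^*,G)\ge 0$ is Lebesgue integrable for every $x^*$, yet $\int_0^1 s(x^*,G)\,dt=\sum_n\langle x^*,e_n\rangle^+$ is the support function of $\{x\in c_0:\ 0\le x_n\le 1\ \forall n\}$, which is not weakly compact, so no integral $w_{[0,1]}\in cwk(c_0)$ exists and $G$ is not Pettis integrable in the sense of Definition~\ref{def2.4}. Hence ``Bochner measurable $+$ scalarly Lebesgue integrable $+$ $0\in G(t)$'' cannot yield Pettis integrability; any correct proof must actually use the $HKP$-integrability of $\vG$, i.e.\ the existence of the interval integrals $w_I\in cwk(X)$, which your third paragraph never invokes. (The same bidual-versus-space issue is hidden in your phrase ``it then follows in the standard way that $f$ is $(HKP)$-integrable'': scalar HK-integrability of $\langle x^*,f\rangle$ only produces interval integrals in $X^{**}$.) Supplying exactly this missing implication is the content of \cite[Theorem 1]{dm2}, which is what the paper cites at this point; once that is granted, the rest of your argument goes through essentially as in the paper.
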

\begin{proof}
Let $f$ be any Bochner measurable selection of $\vG$. Then $f$ is $HKP$-integrable and the mapping $G$ defined by
$ \vG = G + f$
is Pettis integrable (see \cite[Theorem 1]{dm2}).
Moreover, as the difference of $i(\vG)$ and $i(\{f\})$, also $i(G)$ is strongly measurable, together with $G$.\\
The variational measures $V_{\Phi}$ associated to the vH-integral
of $\vG$ and
$V_{\phi}$ associated to any $HKP$-integrable selection $f$ of $\vG$ are absolutely continuous with respect to
$\lambda$.
We can deduce then that the variational measure associated to the integral $J$ of $G$ is also $\lambda$-continuous.
So, the mapping $G$ is variationally Henstock integrable thanks to Theorem \ref{ex41}.\\  Birkhoff integrability of $G$ follows from \cite[Proposition 4.1]{cdpms2016} since the support functionals of $G$ are non-negative. \qed
\end{proof}

\begin{cor}
Let  $\vG:[0,1]\to cwk(X)$  be any variationally Henstock integrable multifunction. Then every strongly measurable selection $f$ of $\vG$ is variationally Henstock integrable and $\vG=G+f$, where $G$  is Birkhoff and variationally Henstock integrable.
\end{cor}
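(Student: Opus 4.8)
The plan is to deduce the corollary from Theorem \ref{BOH} by checking that variational Henstock integrability of $\vG$ already forces the three hypotheses \ref{BOH}.1)--\ref{BOH}.3), and then to read off the extra integrability of the selection from the linearity of the integral under the R{\aa}dstr\"{o}m embedding.

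First I would verify \ref{BOH}.2). Comparing the defining inequalities \eqref{e14bis} and \eqref{e14}, a $\delta$-fine Perron partition of $[0,1]$ that controls $\sum_i d_H(\Phi(I_i),\vG(t_i)|I_i|)$ also controls $d_H(\Phi([0,1]),\sum_i \vG(t_i)|I_i|)$, by additivity of the primitive $\Phi$ together with the subadditivity of $d_H$ with respect to Minkowski sums; hence $\vG$ is Henstock integrable. Passing to the embedding, $i(\vG)$ is then Henstock integrable in $l_\infty(B(X^*))$, so for each $x^*$ the coordinate $s(x^*,\vG(\cdot))$ is Henstock--Kurzweil integrable with primitive $s(x^*,\Phi(\cdot))$; taking $w_I:=\Phi(I)\in cwk(X)$ shows that $\vG$ is $HKP$-integrable. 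Condition \ref{BOH}.3) is immediate from the recalled fact that the variational measure $V_\Phi$ of the primitive of a variationally Henstock integrable mapping is $\lambda$-continuous (Proposition 3.3.1 in \cite{porcello}).

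The crux is \ref{BOH}.1), the Bochner measurability of $\vG$. Here I would invoke the embedding once more: $\vG$ is variationally Henstock integrable if and only if $i(\vG)$ is, and a variationally Henstock integrable Banach-space valued function is strongly measurable (a structural property of the variational integral, stemming from the $\lambda$-continuity of its variational measure and the almost everywhere differentiability of the primitive); since $d_H(\vG_n,\vG)=\|i(\vG_n)-i(\vG)\|_\infty$, strong measurability of $i(\vG)$ is exactly Bochner measurability of $\vG$. I expect this to be the main obstacle, because --- unlike in Theorem \ref{ex41}, where McShane integrability is assumed --- variational Henstock integrability does not supply McShane integrability, so the measurability must be extracted from the variational integral itself rather than from Theorem \ref{ex41}.

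With the three hypotheses in hand, Theorem \ref{BOH} yields $\vG=G+f$ for any strongly measurable selection $f$, with $G$ both variationally Henstock and Birkhoff integrable. It remains to see that $f$ itself is variationally Henstock integrable. Note that Theorem \ref{ex41} cannot be applied to $f$ directly, since $f$ need not be McShane integrable; instead I would use linearity. Applying the embedding to the decomposition gives $i(\{f\})=i(\vG)-i(G)$, a difference of two variationally Henstock integrable $l_\infty(B(X^*))$-valued functions; by linearity of the variational Henstock integral $i(\{f\})$ is variationally Henstock integrable, hence so is the singleton-valued multifunction $t\mapsto\{f(t)\}$, i.e. $f$ is variationally Henstock integrable. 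This completes the plan.
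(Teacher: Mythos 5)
Your proposal is correct and follows essentially the same route as the paper: the paper's proof likewise observes that variational Henstock integrability implies hypotheses (\ref{BOH}.1)--(\ref{BOH}.3), applies Theorem \ref{BOH} to obtain $\vG=G+f$ with $G$ Birkhoff and variationally Henstock integrable, and then concludes that $f$ is variationally Henstock integrable from $i(\{f\})=i(\vG)-i(G)$ and linearity under the R{\aa}dstr\"{o}m embedding. The only difference is one of detail: the paper asserts the three hypotheses in a single sentence, whereas you spell out their verification (Henstock via the Saks--Henstock-type estimate, $\lambda$-continuity via the cited property of variational primitives, and Bochner measurability via strong measurability of $i(\vG)$), which is a faithful elaboration rather than a different argument.
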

\begin{proof}
Since $\vG$ is variationally Henstock integrable, it satisfies (\ref{BOH}.1), (\ref{BOH}.2) and (\ref{BOH}.3), so from  Theorem \ref{BOH} we deduce that, for every strongly measurable selection $f$ the mapping $G$ defined by $\vG=G+f$ is variationally Henstock and Birkhoff integrable. Finally, since $i(\{f\})=i(\vG)-i(G)$, it follows easily that $i(\{f\})$ (and therefore also $f$) is variationally Henstock integrable.
\end{proof}

We observe that
 the   conditions (\ref{BOH}.1), (\ref{BOH}.2), (\ref{BOH}.3) do not  imply in general  variational Henstock integrability of $\vG$. But the implication holds true, as already observed, if (\ref{BOH}.2) is replaced by the stronger request that $\vG$ is Pettis integrable, by Theorem \ref{ex41}.\\

So, we shall now give an example of a mapping $f:[0,1]\to c_0$, satisfying (\ref{BOH}.1), (\ref{BOH}.2), (\ref{BOH}.3), but not Henstock integrable (and so, a fortiori, not variationally Henstock integrable).
The example is the same as \cite[Example 2]{glasgow}, and \cite[Example]{gm}.
\begin{ex}\label{exc0} \rm
Fix any disjoint sequence of closed subintervals $(J_n)_n:=([a_n,b_n])_n$ in $[0,1]$, such that $0=a_1<b_1<a_2<b_2<...$ and $\lim_n b_n=1$.

Now the function $f$ is defined as follows:
\[f_n(t):=\frac{1}2|J_{2n-1}|\chi_{J_{2n-1}}(t)-\frac{1}2|J_{2n}|\chi_{J_{2n}}(t),\]
for each $t\in [0,1]$, where $f_n$ denotes the $n^{th}$ component of $f$.
Proceeding as in the quoted papers \cite{glasgow,gm}, it is possible to see that $f$ is (HKP)-integrable, and, for every interval $I\subset [0,1]$:
\[\int_I f dt=\left(\frac{\lambda(I\cap J_{2n-1})}{2|J_{2n-1}|}-\frac{\lambda(I\cap J_{2n})}{2|J_{2n}|}\right)_n.\]
However, as observed in \cite[Example 2]{glasgow}, the set $\{\int_I f(t)dt: I\in \mathcal{I}\}$ is not relatively norm compact in $c_0$, hence $f$ is not Henstock integrable, thanks to \cite[Proposition 1]{glasgow}.

Since $c_0$ is separable, $f$ is strongly measurable; then, we only have to prove that the variational measure associated with the integral $\vG$ of $f$ is $\lambda$-continuous.

To this aim, let us fix any null set $A\in \mathcal{L}$, and fix any $\varepsilon>0$. Then clearly $\lambda(A\cap J_n)=0$ for every integer $n$. So, for every $n$ there exists an open set $U_n$ such that $A\cap J_n\subset U_n$ and $\lambda(U_n)\leq \dfrac{\varepsilon}{2^n}|J_n|$.

Now, for each element $t\in A$, we define $\delta(t)$ in such a way that $[t-\delta(t),t+\delta(t)]\subset U_n$ whenever $t\in A\cap J_n^0$, and $[t-\delta(t),t+\delta(t)]\subset ]b_{n(t)},a_{n(t)+1}[$ as soon as $t\notin \cup_n J_n$ and $n(t)$ is such that $t\in ]b_{n(t)},a_{n(t)+1}[$. Finally, if $t\in A$ is one of the points $a_n$ or $b_n$, we choose $\delta(t)$ in such a way that $[t-\delta(t),t+\delta(t)]\subset U_n$ and $[t-\delta(t),t+\delta(t)]$ intersects just one of the intervals $J_n$.

So, if $\{(I_k,t_k), k=1,...,K\}$ is any $\delta$-fine Henstock partition in $[0,1]$, with tags in $A$, for each index $k$ the interval $I_k$ intersects at most one of the intervals $J_n$, and therefore $F(I_k):=\int_{I_k}f(t)dt$ has no more than one component different from $0$ (say $n$), and
\[
\left\| \int_{I_k}f(t)dt \right\|\leq \dfrac{\lambda(I_k\cap J_n)}{2|J_n|}.\]
Hence, summing as $k=1,...,K$, we get
\begin{eqnarray*}
\sum_k\|F(I_k)\| &=&\sum_n\sum_{t_k\in J_n} \left\|\int_{I_k}f(t)dt \right\|
\leq \sum_n\sum_{t_k\in J_n}\frac{\lambda(I_k\cap J_n)}{2|J_n|}\leq \\ &\leq&
\sum_n\frac{\lambda(U_n)}{2|J_n|}\leq \sum_n\frac{\varepsilon}{2^n}=\varepsilon.
\end{eqnarray*}
This proves that $f$ satisfies condition (\ref{BOH}.3) as required.
\end{ex}

The function from the above example has a trivial decomposition: $f(t)=\{0\}+f(t)$. To have an example  of a multifunction $\vG:[0,1]\to cwk(c_0)$ satisfying  (\ref{BOH}.1),  (\ref{BOH}.2),  (\ref{BOH}.3) and possessing a non-trivial decomposition, it is sufficient to take any variationally Henstock integrable multifunction $\vG_0:[0,1]\to cwk(c_0)$, and define: $\vG(t)=\vG_0(t)+f(t)$, for $t\in [0,1]$, where $f$ is the function defined above. Since $i(\vG)=i(\vG_0)+i(\{f(t)\})$, we easily see that $i(\vG)$ (and so $\vG$) satisfies (\ref{BOH}.1),  (\ref{BOH}.2),  (\ref{BOH}.3), but it cannot be Henstock integrable, otherwise $i(\{f\})$ (and therefore $f$) would be, by difference.\\

Now, we shall give a characterization of the Birkhoff integrability  for strongly measurable integrands.
\begin{prop}\label{trecinque}
Assume that $f:[0,1]\to X $  is strongly measurable.
Then for every $\varepsilon>0$ there exists a countable partition of $[0,1]$, made of  measurable sets $(A_h)_h$, such that $f|_{A_h}$ is Bochner integrable for every $h\in\N$ and
\begin{eqnarray}\label{flaVB}
 \sum_k \left\|f(t_k)\lambda(A'_k)-\int_{A'_k}f \right\|\leq \varepsilon \end{eqnarray}
holds true, for any partition $\{A'_k\}_k$ refining  $(A_n)_n$, and any choice of  points $t_k$ in $A'_k$, $k\in \N$.
\end{prop}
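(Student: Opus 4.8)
The plan is to exploit the defining property of strong measurability: since $f$ is strongly measurable, it is the almost-everywhere limit of a sequence of simple functions, and by Egorov's theorem this convergence is uniform off a set of arbitrarily small measure. I would first use this to produce, for a given $\varepsilon>0$, a decomposition of $[0,1]$ into countably many measurable pieces on each of which $f$ is essentially well-behaved. Concretely, I would fix a simple function $g$ that approximates $f$ uniformly on a large set and handles the rest on a countable family of small-measure sets; on each piece $A_h$ the restriction $f|_{A_h}$ should be bounded (or close to a constant value), which immediately gives Bochner integrability of $f|_{A_h}$, the first assertion of the proposition.

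The heart of the argument is the Riemann-type estimate (\ref{flaVB}). On a piece $A_h$ where $f$ is within $\eta_h$ (in norm) of a constant vector $c_h$, for any measurable subset $A'_k\subset A_h$ and any tag $t_k\in A'_k$ I can estimate
\begin{eqnarray*}
\left\|f(t_k)\lambda(A'_k)-\int_{A'_k}f\right\|
&\leq& \|f(t_k)-c_h\|\,\lambda(A'_k)+\left\|\int_{A'_k}(c_h-f)\right\| \\
&\leq& 2\eta_h\,\lambda(A'_k).
\end{eqnarray*}
Summing over all $A'_k$ contained in a single $A_h$ contributes at most $2\eta_h\,\lambda(A_h)$, and summing over $h$ gives a total bounded by $2\sum_h \eta_h\,\lambda(A_h)$. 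The strategy is therefore to choose the oscillation bounds $\eta_h$ and the sets $A_h$ at the outset so that this weighted sum is below $\varepsilon$; the uniform-approximation set can be taken with $\eta$ uniformly small, while the exceptional pieces are chosen with total measure so small that even a crude bound on the oscillation there is absorbed.

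I would organize the construction so that the partition $(A_h)_h$ consists of the level sets of the approximating simple function on the good set, together with a countable exhaustion of the small exceptional set into pieces on which $f$ is bounded (possible because a strongly measurable function is a.e.\ separably valued and a.e.\ finite in norm). The main obstacle I anticipate is the exceptional set: uniform approximation fails there, so I cannot bound the oscillation of $f$ uniformly across it. The fix is to slice it into countably many subsets $A_h$ where $\|f\|$ lies in a bounded range and to arrange $\lambda(A_h)$ to decay fast enough that $\sum_h \eta_h\,\lambda(A_h)$ still converges below the target; this is a careful but routine bookkeeping step. Once the partition is fixed with these properties, the refinement condition is handled automatically, since any $A'_k$ refining $(A_n)_n$ lies inside a single $A_h$ and inherits that block's oscillation bound, so the displayed estimate passes to refinements without change.
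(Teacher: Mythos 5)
Your overall reduction is sound: since any $A'_k$ in a refinement sits inside a single block $A_h$, the left-hand side of (\ref{flaVB}) is bounded by $\sum_h \mathrm{osc}(f,A_h)\,\lambda(A_h)$, where $\mathrm{osc}(f,A_h)$ is the diameter of $f(A_h)$, so everything hinges on building a partition that makes this weighted sum small. The genuine gap is in your treatment of the Egorov exceptional set $E$. You propose to slice $E$ into pieces $E_m$ on which $\|f\|$ lies in a bounded range and to ``arrange $\lambda(E_m)$ to decay fast enough'' that $\sum_m \eta_m\lambda(E_m)$ stays below the target. But those measures are not at your disposal: $E_m = E\cap\{m\le\|f\|<m+1\}$ is determined by $f$ and $E$, the oscillation bound $\eta_m$ on $E_m$ is of order $m$, and so $\sum_m \eta_m\,\lambda(E_m)$ is comparable to $\int_E\|f\|\,d\lambda$. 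The proposition assumes only strong measurability, not Bochner integrability, and when $\|f\|\notin L^1$ this integral is infinite for \emph{every} admissible choice of $E$: off the Egorov set $f$ is a uniform limit of simple (hence bounded) functions, so $f$ is bounded there, whence $\{\|f\|>M\}\subset E$ for some $M$ and $\int_E\|f\|\,d\lambda=\infty$. (Already $f(t)=1/t$ with $X=\R$ defeats the scheme.) So the bookkeeping step fails precisely in the cases that make the proposition nontrivial: small measure cannot compensate for unbounded oscillation.

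The repair --- and what the paper actually does --- is to make the oscillation uniformly small on \emph{every} block, with no exceptional block at all; then the sum is at most $\varepsilon\sum_h\lambda(A_h)=\varepsilon$ no matter how large $\|f\|$ gets. The paper achieves this via Lusin-type measurability (citing Naralenkov): there is a disjoint sequence of closed sets $B_n$ of full total measure on which $f$ is continuous, and cutting each $B_n$ by a finite grid of intervals of length below a uniform-continuity threshold $\delta_n$ produces cells $B_n\cap I^n_j$ on which $f$ oscillates by at most $\varepsilon$. Alternatively, and even more simply, you could use the fact that a strongly measurable $f$ is a.e.\ separably valued: take preimages under $f$ of countably many balls of radius $\varepsilon/4$ covering the essential range and disjointify them; each resulting block has oscillation at most $\varepsilon/2$, and $f$ restricted to it is bounded, hence Bochner integrable. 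Either route removes the good/bad dichotomy on which your argument founders.
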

\begin{proof}
Since $f$  is Bochner measurable, then $f$ is also Lusin measurable, see  \cite[Section 3]{nara}.
Now, in order to prove (\ref{flaVB}), fix arbitrarily $\varepsilon>0$.
Thanks to Lusin measurability of $f$
there exists a sequence $(B_n)_n$ of pairwise disjoint closed subsets of $[0,1]$, such that
$\sum_{n=1}^{+\infty}\lambda(B_n)=1$, and a corresponding sequence $(\delta_n)_n$ of positive real numbers, such that
$$\|f(t)-f(t')\|\leq \varepsilon$$
holds for every $n$, as soon as  $t,t'$ are in $B_n$ and $|t-t'|\leq \delta_n$. Since $f_{|B_n}$ is continuous, it is obviously Bochner integrable in $B_n$, hence we can deduce also that
\begin{eqnarray}\label{mediavera}
\left\| f(t)-\frac{\int_{B_n\cap I}f}{\lambda(B_n\cap I)}\right\|\leq \varepsilon
\end{eqnarray}
holds true, for every $n$, every interval $I\subset [0,1]$ with $|I|\leq \delta_n$ and every point $t\in B_n\cap I$ (provided the latter is non-negligible).\\
Now, in order to construct the announced partition $(A_n)_n$,
for every integer $n$ fix a finite partition $\{I_1^n,...,I^n_{N(n)}\}$ of $[0,1]$ consisting of half-open intervals of the same length, with $|I^n_j|<\delta_n$ for all $j$, and set
$A^{n+1}_j:=B_n\cap I^n_j$ whenever the intersection is non-negligible.
From (\ref{mediavera}), we see that
\begin{eqnarray}\label{quasifin}
 \left\|f(t_j^{n+1})-\dfrac{\int_{A^{n+1}_j}f}{\lambda(A^{n+1}_j)}\right\|\leq \varepsilon \end{eqnarray}
holds true, for every $n$ and every choice of points $t_j^{n+1}\in A^{n+1}_j$.
Finally, since  the sequence
$(A^{n+1}_j)_{ \{ n \in \mathbb{N}, j \in \{1, \ldots, N(n) \}\} }$ is composed by  pairwise disjoint sets,
we can rearrange them in a sequence $(A_h)_{h=2}^{\infty}$.
Adding also the null set $A_1:=[0,1]\setminus (\cup_{h=2}^{\infty} A_h)$, we claim that the partition $(A_h)_h$ obtained in this way is the requested one.\\ Indeed, let $\{A'_k\}_k$ be any refinement of $(A_h)_h$. Since each non-negligible $A'_k$ is contained in some set of the type $B_n\cap I_j$, we also have, similarly to (\ref{mediavera})
\begin{eqnarray}\label{lastref}
\left\|f(t_k)-\dfrac{\int_{A'_k}f}{\lambda(A'_k)} \right\|\leq \ve
\end{eqnarray}
as soon as $t_k\in A'_k$. So
\begin{eqnarray*}
&& \sum_k \left\|f(t_k)\lambda(A'_k)-\int_{A'_k}f \right\|\leq
\sum_n\sum_j \sum_{A'_k\subset B_n\cap I_j^n} \left\|f(t_k)\lambda(A'_k)-\int_{A'_k}f \right\|\leq \\
&& \sum_n\sum_j\sum_{A'_k\subset B_n\cap I_j^n}\ve\lambda(A'_k)= \sum_n\sum_j \ve\lambda(B_n\cap I_j^n)=\sum_n \ve\lambda(B_n)= \ve,\end{eqnarray*}
as requested.
\end{proof}
\begin{rem} \rm
We observe that  if $f$ satisfies (\ref{flaVB}),  in general $f$ is not variationally Henstock integrable: indeed, in \cite[Remark 4.3]{BDpM2} it is shown that for every Banach space $X$ there are Pettis integrable mappings $f:[0,1]\to X$ whose variational measure associated to the integral fails to be $\lambda$-continuous. In particular, if $X$ is separable, such functions are also Birkhoff integrable and strongly measurable.\\
\end{rem}

\begin{cor}
Assume that $f:[0,1]\to X$ is strongly measurable. Then $f$ is Birkhoff integrable if and only if
 there exists a countable partition $(A_k)_k$ of $[0,1]$ such that the restriction $f |_{A_k}$ is bounded whenever $\lambda(A_k) > 0$ and
$\left\{ \sum_{k} f(t_k) \lambda(A_k), t_k \in A_k \right\}$ is made up of  unconditionally convergent series.
\end{cor}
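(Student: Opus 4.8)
The strategy is to let Proposition \ref{trecinque} carry the analytic burden and to handle the unconditional convergence separately. A preliminary observation: the partition built in the proof of Proposition \ref{trecinque} has its pieces contained in the closed sets $B_n$ on which $f$ is continuous; since each $B_n\subset[0,1]$ is compact, $f$ is \emph{bounded} on every such piece. Thus, for strongly measurable $f$, Proposition \ref{trecinque} actually furnishes, for each $\ve>0$, a countable measurable partition whose non-negligible pieces carry a bounded (hence Bochner integrable) restriction of $f$ and along whose refinements the oscillation sum (\ref{flaVB}) stays below $\ve$. For the \emph{necessity} direction, suppose $f$ is Birkhoff integrable and fix $\ve=1$ in the definition of Birkhoff integrability; this yields a countable partition $\Pi_0$ such that every refinement produces, for every choice of tags, an unconditionally convergent sampled series. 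Let $Q$ be a partition supplied by Proposition \ref{trecinque} and let $(A_k)_k$ be the common refinement of $\Pi_0$ and $Q$. Being finer than $\Pi_0$, it makes $\sum_k f(t_k)\lambda(A_k)$ unconditionally convergent for every selection $t_k\in A_k$; being finer than $Q$, each of its non-negligible pieces lies in a piece of $Q$, hence carries a bounded restriction of $f$. This is exactly the required partition.

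For \emph{sufficiency}, assume $(A_k)_k$ is as in the statement. The key step is a refinement lemma: if $\sum_k f(t_k)\lambda(A_k)$ is unconditionally convergent for every selection of tags, then so is $\sum_n f(t_n)\lambda(A'_n)$ for every refinement $(A'_n)_n$ of $(A_k)_k$ and every selection. I would first upgrade ``unconditional convergence for every selection'' to a uniform Cauchy condition, namely that for every $\ve$ there is a finite set $K_0$ with $\sup\{\|\sum_{k\in K}\epsilon_k f(t_k)\lambda(A_k)\|\}\le\ve$ taken over all finite $K$ disjoint from $K_0$, all signs and all tags. This follows by a gliding-hump argument: a failure of uniformity lets one splice the offending selections into a single selection whose series is not unconditionally convergent, a contradiction. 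Since $\|\sum_k\epsilon_k y_k\|$ is convex in each $y_k$, this uniform bound is unchanged when each $f(t_k)$ is replaced by a convex combination of values of $f$ on $A_k$; and the contribution of a refinement inside $A_k$ is precisely $\lambda(A_k)$ times such a convex combination. Hence the uniform Cauchy condition, and with it unconditional convergence, descends to $(A'_n)_n$.

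With the lemma available, fix $\ve>0$, take the partition $(C_h)_h$ from Proposition \ref{trecinque} for this $\ve$, and let $\Pi_0$ be the common refinement of $(A_k)_k$ and $(C_h)_h$. For any refinement $\Pi=(A'_n)_n$ of $\Pi_0$ and any tags, estimate (\ref{flaVB}) gives $\sum_n\|f(t_n)\lambda(A'_n)-\int_{A'_n}f\|\le\ve$, so the correction series $\sum_n\big(f(t_n)\lambda(A'_n)-\int_{A'_n}f\big)$ converges absolutely; together with the unconditional convergence of $\sum_n f(t_n)\lambda(A'_n)$ from the lemma, this forces $\sum_n\int_{A'_n}f$ to converge unconditionally. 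By countable additivity of the piecewise Bochner integral, regrouping this unconditionally convergent series along the coarser partition identifies its sum as $\sum_k\int_{A_k}f=:L$, a value independent of $\ve$ and of the chosen refinement (two admissible partitions have a common refinement on which both regroupings agree). Finally $\|L-\sum_n f(t_n)\lambda(A'_n)\|\le\sum_n\|\int_{A'_n}f-f(t_n)\lambda(A'_n)\|\le\ve$, which is exactly the defining estimate for Birkhoff integrability with integral $L$.

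The part I expect to be genuinely delicate is the refinement lemma, specifically the passage from ``every selection gives an unconditionally convergent series'' to uniformity over all selections, and then the transfer across refinements via the convexity remark. Everything else — the boundedness of the restrictions, the reduction of Birkhoff sums to integral pieces, and the well-definedness of $L$ — is bookkeeping resting on Proposition \ref{trecinque} and the countable additivity of the Bochner integral.
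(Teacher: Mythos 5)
Your proof is correct, and structurally it is the paper's proof: Proposition \ref{trecinque} carries the analytic burden in both. The difference is that the paper's entire argument is a one-line appeal to \cite[Proposition 2.2]{crbb}, the Cascales--Rodr\'{i}guez summability machinery, which packages exactly what you prove by hand: that ``summability'' with respect to one countable partition (bounded restrictions on non-null pieces plus unconditional convergence of every tagged series) survives passage to finer partitions, after which the oscillation estimate (\ref{flaVB}) forces the Birkhoff condition. Your gliding-hump upgrade to a uniform Cauchy condition and the convexity transfer across refinements are a faithful, self-contained reconstruction of that cited result; what your route buys is independence from \cite{crbb} and a transparent view of where strong measurability enters (only through Proposition \ref{trecinque}), while the paper's citation buys brevity. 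Two small completions of your sketch are worth recording. First, both directions need boundedness (not merely Bochner integrability) of $f$ on the non-null pieces coming from Proposition \ref{trecinque}; that is not in its statement, and your observation that the pieces built in its proof lie inside the compact sets $B_n$ on which $f$ is continuous is the necessary fix. Second, the uniform Cauchy condition does not descend verbatim to a refinement $(A'_n)_n$: the refined indices whose parents belong to the exceptional finite set $K_0$ form an infinite family, so for those parents you must additionally use that the refined sub-series is absolutely convergent (boundedness again), discard a finite set of its indices capturing all but $\ve$ of its mass, and note that for the remaining parents the partial coefficients $\mu_k\in[0,\lambda(A_k)]$ can be pushed to the extreme points $\{0,\lambda(A_k)\}$ by convexity before applying the uniform bound. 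These are routine completions of the two ideas you already state, not gaps in the approach.
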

\begin{proof}
It is enough to apply Proposition 3.5 together with \cite[Proposition 2.2]{crbb}.
\end{proof}

By the properties of the   R{\aa}dstr\"{o}m embedding we can obtain also  that a Bochner measurable multifunction $\vG: [0,1] \to cwk(X)$ is  Birkhoff integrable if and only
 is there exists a countable partition of $[0,1]$: $(A_k)_k$ such that the restriction $F |_{A_k}$ is bounded whenever $\lambda(A_k) > 0$ and
$\left\{ \sum_{k}\vG(t_k) \lambda(A_k), t_k \in A_k \right\}$ is made up of  unconditionally convergent series.
\\

Finally, we would like to remark that, for strongly measurable maps, Birkhoff integrability can be labeled in a form that recalls the notion of variational Henstock integrability (though there is no direct implication between the two notions, in general).
We first give a definition.
\begin{deff}\label{sBi}\rm
Given a function $f:[0,1]\to X$, we say that $f$ is {\em strongly Birkhoff} integrable if there exists a $\lambda$-continuous $\sigma$-additive measure $\phi:\mathcal{L}\to X$ such that, for every $\varepsilon>0$ it is possible to find a countable partition $P$ of $[0,1]$, made with measurable sets, such that, as soon as $\{A_n\}_n$ is a refinement of $P$, it holds
\[\sum_n \|f(t_n)\lambda(A_n)-\phi(A_n)\|\leq \ve\]
for every choice of $t_n\in A_n$, $n\in \mathbb{N}$.
\end{deff}

\noindent We now observe that, as soon as $f:[0,1]\to X$ is strongly measurable and Birkhoff integrable, it turns out to be also {\em strongly Birkhoff} integrable, thanks to Proposition \ref{trecinque}, and of course $\phi$ is the (Birkhoff) integral function of $f$. 
\\

Conversely, we have the following results.
\begin{prop}
Assume that $f:[0,1]\to X$ is strongly Birkhoff integrable. Then it is also Birkhoff integrable.
\end{prop}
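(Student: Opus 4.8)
The plan is to show that the strong Birkhoff hypothesis furnishes, for each $\ve>0$, a countable measurable partition $P_\ve$ of $[0,1]$ that simultaneously controls the Riemann--type sums and pins them near the prescribed values $\phi(A_n)$; then to recover the genuine Birkhoff property by using the $\sigma$-additivity and $\lambda$-continuity of $\phi$ to identify $\sum_n \phi(A_n)$ with $\phi([0,1])$ for any refinement. Concretely, fix $\ve>0$ and take the partition $P=P_\ve$ granted by Definition \ref{sBi}. For any refinement $\{A_n\}_n$ of $P$ and any tags $t_n\in A_n$, I would write
\[
\sum_n f(t_n)\lambda(A_n)-\phi([0,1])
=\sum_n \bigl(f(t_n)\lambda(A_n)-\phi(A_n)\bigr)+\Bigl(\sum_n \phi(A_n)-\phi([0,1])\Bigr).
\]
The first bracketed term is absolutely summable with total norm at most $\ve$ by the defining inequality, and the second vanishes because $\phi$ is a $\sigma$-additive $X$-valued measure and $\{A_n\}_n$ partitions $[0,1]$.

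Next I would argue unconditional convergence of $\sum_n f(t_n)\lambda(A_n)$, which is the genuinely new content one must extract, since Definition \ref{sBi} only asserts absolute summability of the \emph{differences}. The key observation is that $\sum_n \|f(t_n)\lambda(A_n)-\phi(A_n)\|\leq \ve$ means the vector series $\sum_n f(t_n)\lambda(A_n)$ differs from $\sum_n \phi(A_n)$ by an absolutely convergent (hence unconditionally convergent) series; and $\sum_n \phi(A_n)$ is itself unconditionally convergent because $\phi$ is a countably additive vector measure, so rearranging the index set still sums to $\phi(\bigcup_n A_n)$ by the Orlicz--Pettis phenomenon built into $\sigma$-additivity. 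A sum of two unconditionally convergent series is unconditionally convergent, so $\sum_n f(t_n)\lambda(A_n)$ converges unconditionally, and its value lies within $\ve$ of $\phi([0,1])$ by the displayed decomposition above. Taking $\varPhi_f([0,1]):=\phi([0,1])$ and letting $\ve\to 0$ through the partitions $P_\ve$ then matches the Birkhoff definition verbatim.

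The main obstacle I anticipate is the passage from the strong Birkhoff inequality, which is phrased for a \emph{single} refinement with a \emph{single} choice of tags, to the uniform control required over \emph{all} refinements of a common coarsening $P_0$ in the Birkhoff definition; but this is actually already baked into Definition \ref{sBi}, whose quantifier order grants the inequality for every refinement of $P$ and every tag choice, so one simply sets $\Pi_0:=P_\ve$. A secondary subtlety is justifying that $\sum_n\phi(A_n)$ equals $\phi([0,1])$ independently of the ordering: this is where $\lambda$-continuity of $\phi$ is convenient, since the refinements only split sets and the control measure supplied by Bartle--Dunford--Schwartz guarantees the tail $\sum_{n>N}\phi(A_n)$ is small, giving unconditional convergence of the $\phi$-series without appeal to any separability of $X$. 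I would therefore organize the proof so that the unconditional convergence of $\sum_n \phi(A_n)$ is established first as a lemma-level remark, after which the two-term splitting finishes the argument cleanly.
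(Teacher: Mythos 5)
Your proof is correct, but it takes a genuinely different route from the paper's. The paper does not verify the Birkhoff definition directly: it only shows that for any refinement $\{A_k\}_k$ of the partition $P$ and any tags, the \emph{ordered} partial sums satisfy $\limsup_n\|\sum_{k=1}^n f(t_k)\lambda(A_k)-\phi([0,1])\|\leq 2\ve$, i.e.\ that $f$ is \emph{simple-Birkhoff} integrable, and then closes by citing the equivalence of simple-Birkhoff and Birkhoff integrability proved in \cite[Theorem 3.18]{ccgs}. You instead establish the full Birkhoff definition head-on, including the unconditional convergence of $\sum_n f(t_n)\lambda(A_n)$, by splitting it as the sum of the absolutely (hence unconditionally) convergent series $\sum_n\bigl(f(t_n)\lambda(A_n)-\phi(A_n)\bigr)$ and the series $\sum_n\phi(A_n)$, which converges unconditionally to $\phi([0,1])$ because every rearrangement of the disjoint sequence $(A_n)_n$ has the same union and $\phi$ is $\sigma$-additive. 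This buys self-containedness: your argument needs no external equivalence theorem, only the two elementary facts just quoted, and it even yields the cleaner bound $\ve$ in place of $2\ve$; the paper's route buys brevity at the price of resting on the cited result. Two small presentational points: neither the Orlicz--Pettis theorem nor the Bartle--Dunford--Schwartz control measure is actually needed for the unconditional convergence of $\sum_n\phi(A_n)$ --- the rearrangement argument from $\sigma$-additivity that you state in the same sentence already does the job --- and the $\lambda$-continuity of $\phi$ plays no role in this implication at all.
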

\begin{proof}
Fix $\ve>0$, and let $P:=\{E_j:j\in \mathbb{N}\}$ be the corresponding partition in the definition of strong Birkhoff integrability. Choose now any finer partition $\{A_k\}_k$,
set $x:=\sum_k\phi(E_k)=\phi([0,1])$, and observe that there exists an integer $N$ such that $\|x-\sum_{k\leq n}\phi(A_k)\|\leq \ve$ for every $n\geq N$. So, for every $n\geq N$, and every choice of points $t_k\in A_k$, we have
\begin{eqnarray*}
&& \|\sum_{k=1}^nf(t_k)\lambda(A_k)-x\|\leq
\|\sum_{k=1}^nf(t_k)\lambda(A_k)-\sum_{k=1}^n \phi(A_k)\|+\ve\leq \\
&&\leq \sum_{k=1}^n\|f(t_k)\lambda(A_k)-\phi(A_k)\|+\ve\leq
 \sum_{k=1}^{\infty}\|f(t_k)\lambda(A_k)-\phi(A_k)\| +\ve \leq 2\ve.
\end{eqnarray*}
This means that
\[\limsup_n \|\sum_{k=1}^nf(t_k)\lambda(A_k)-x\|\leq 2\ve.\]
This property in \cite{ccgs} is called {\em simple-Birkhoff} integrability, and is proved in \cite[Theorem 3.18]{ccgs} to be equivalent to Birkhoff integrability of $f$.
\end{proof}
\begin{prop}
Assume that $f:[0,1]\to X$ is strongly Birkhoff integrable. Then $f$ is also strongly measurable.
\end{prop}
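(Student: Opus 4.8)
The plan is to show that strong Birkhoff integrability forces the measurability structure of $f$ to mimic that of a Bochner measurable function, by extracting from the defining partitions a scheme of simple approximations converging to $f$ almost everywhere. Recall that the definition of strong Birkhoff integrability gives, for each $\varepsilon>0$, a countable partition $P_\varepsilon=\{E^\varepsilon_j\}_j$ of $[0,1]$ into measurable sets with the property that for every refinement $\{A_k\}_k$ and every tag choice $t_k\in A_k$ one has $\sum_k\|f(t_k)\lambda(A_k)-\phi(A_k)\|\le\varepsilon$. I would first apply this with a sequence $\varepsilon_m\downarrow 0$ (say $\varepsilon_m=2^{-m}$) to obtain partitions $P_m$, and pass to the common refinement so that $P_{m+1}$ refines $P_m$; this is legitimate since the estimate persists on any refinement.

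The key step is to read off a pointwise-convergent sequence of simple functions from these partitions. On each cell $A$ of $P_m$ with $\lambda(A)>0$, the defining inequality applied to the single-cell tag shows that for any two tags $t,t'\in A$ one has $\|f(t)\lambda(A)-\phi(A)\|+\|f(t')\lambda(A)-\phi(A)\|\le 2^{-m}$, whence $\|f(t)-f(t')\|\le 2^{-m}/\lambda(A)$; more usefully, the full sum estimate over a refinement controls the aggregate oscillation of $f$ across the cells. I would define the simple function $f_m$ by assigning to each positive-measure cell $A$ of $P_m$ the averaged value $\phi(A)/\lambda(A)$ (which is an honest element of $X$ since $\phi$ takes values in $X$). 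The aim is to show $f_m\to f$ in measure, and then along a subsequence almost everywhere, which yields strong measurability provided one also checks essential separable-valuedness.

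The main obstacle I expect is precisely controlling the convergence $\|f(t)-\phi(A_m(t))/\lambda(A_m(t))\|\to 0$ for almost every $t$, where $A_m(t)$ denotes the cell of $P_m$ containing $t$. The summed estimate $\sum_k\|f(t_k)\lambda(A_k)-\phi(A_k)\|\le\varepsilon_m$ bounds a weighted total, not each individual deviation, so a Chebyshev-type argument is needed: dividing through, $\sum_k\lambda(A_k)\,\bigl\|f(t_k)-\phi(A_k)/\lambda(A_k)\bigr\|\le\varepsilon_m$ says the average deviation is at most $\varepsilon_m$, so the set of tags where the pointwise deviation exceeds $\sqrt{\varepsilon_m}$ has measure at most $\sqrt{\varepsilon_m}$. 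This gives convergence in measure of $f$ to the sequence $(f_m)$ along arbitrary tags, and a Borel--Cantelli argument with $\varepsilon_m=4^{-m}$ upgrades this to almost everywhere convergence along a subsequence.

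Finally I would address essential separable-valuedness: since $\phi$ is $\sigma$-additive and countably-valued-approximable through the countable families $P_m$, the values $\{\phi(A)/\lambda(A): A\in P_m,\ \lambda(A)>0,\ m\in\mathbb N\}$ form a countable subset of $X$, and the almost-everywhere limit $f$ takes values in the closed linear span of this countable set off a null set. By the Pettis measurability theorem, pointwise almost-everywhere limit of simple functions with essentially separable range is strongly measurable, so $f$ is strongly measurable. I would remark that the argument uses only the defining sum-estimate and the $\sigma$-additivity of $\phi$, never Birkhoff integrability itself, so it is genuinely the converse direction advertised before the statement.
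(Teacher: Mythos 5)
Your proof is correct, but it takes a genuinely different route from the paper. The paper argues by contradiction: it takes $H$ to be the closed separable subspace spanned by the range of $\phi$, supposes the set $T_{\varepsilon'}=\{t: d(f(t),H)>\varepsilon'\}$ has positive outer measure $a$, covers it by a measurable set $A'$ of measure $a$, refines a partition witnessing strong Birkhoff integrability (for $a\varepsilon$) by $A'$, and then chooses tags adversarially inside $T_{\varepsilon'}$ to force some cell to satisfy $\bigl\| f(t_n)-\phi(A_n)/\lambda(A_n)\bigr\|\leq \varepsilon<\varepsilon'$, contradicting $\phi(A_n)/\lambda(A_n)\in H$; essential separable-valuedness plus the Pettis theorem then finishes. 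You instead run the same adversarial-tag mechanism quantitatively rather than for a contradiction: the Chebyshev step (choose one bad tag in each cell where the deviation from $\phi(A)/\lambda(A)$ exceeds $\sqrt{\varepsilon_m}$, so the sum estimate forces those cells to have total measure below $\sqrt{\varepsilon_m}$) plus Borel--Cantelli yields that the countably-valued measurable functions $f_m:=\sum_{A\in P_m}\frac{\phi(A)}{\lambda(A)}\chi_A$ converge to $f$ almost everywhere, whence strong measurability follows directly. Your approach is constructive and gives slightly more (an explicit a.e.\ approximation of $f$ by averages of $\phi$, in the spirit of a Radon--Nikodym derivative), and in principle it does not even need the Pettis theorem, since an a.e.\ limit of countably-valued measurable functions is strongly measurable; the paper's argument is shorter. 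Two points of polish you should make explicit: the bad sets $S_m$ are not known to be measurable a priori (as $f$ is not yet measurable), so the Chebyshev conclusion must be stated as an outer-measure bound --- $S_m$ is \emph{contained} in the union of the cells containing a bad point, which is measurable of measure at most $\sqrt{\varepsilon_m}$ --- and Borel--Cantelli then runs on outer measures by countable subadditivity; and cells of measure zero should be discarded (their union over all $m$ is null, and $\phi$ vanishes on them by its assumed $\lambda$-continuity), since $\phi(A)/\lambda(A)$ is undefined there. Also, your parenthetical two-tag estimate $\|f(t)\lambda(A)-\phi(A)\|+\|f(t')\lambda(A)-\phi(A)\|\leq 2^{-m}$ needs two separate applications of the definition (one per tag), giving $2\cdot 2^{-m}$, but this remark is not used in your main argument.
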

\begin{proof}
We shall follow the technique of \cite[Lemma 3]{dm11}. First of all, let us denote by $H$ the separable subspace of $X$ generated by the range of $\phi$. Next, for every $\ve>0$ let us set
\[T_{\ve}:=\{t\in [0,1]: d(f(t),H)>\ve\}.\]
Now, we shall prove that $\lambda(T_{\ve})=0$ for all $\ve>0$, from which it will follow that $f$ is separably-valued, and so also strongly measurable, thanks to the Pettis Theorem.
So, assume by contradiction that there exists a $\ve'>0$ such that $\lambda^*(T_{\ve'})=a>0$. Let $A'$ denote any measurable set such that  $T_{\ve'}\subset A'$ and $\lambda(A')=\lambda^*(T_{\ve'})=a$.
\\
Now, fix $\ve<\ve'$ and take any partition $\Pi\equiv \{E_j\}_j$ satisfying the condition of strong Birkhoff integrability of $f$ with respect to $a\ve$. Moreover let $\{A_n\}_n$ be any finer partition, such that $A'\cap E_j$ and $E_j\setminus A'$ are among the sets $A_n$. Without loss of generality, we can assume that $\lambda(A_n)>0$ for all $n$.
Observe that, if $A_n$ is of the type $A'\cap E_j$, then $A_n\cap T_{\ve'}$ is non-empty, otherwise $A'\setminus A_n$ would contain $T_{\ve'}$ but would have smaller measure than $a$. So, we can choose $t_n\in A_n\cap  T_{\ve'}$ as soon as $A_n$ is of the type $A'\cap E_j$. Thus we have
\[\sum'_n\|f(t_n)\lambda(A_n)-\phi(A_n)\|\leq a \ve\]
where the $\sum'$ runs along the sets $A_n$ of the type $A'\cap E_j$.
Therefore, since $A'$ is the union of all the sets $A_n$ of the type above,
\[\sum'_n\|f(t_n)\lambda(A_n)-\phi(A_n)\|\leq \ve \sum'_n\lambda(A_n)\]
i.e.
\[\sum'_n\left\| f(t_n)-\dfrac{\phi(A_n)}{\lambda(A_n)} \right\|\lambda(A_n)\leq \ve \sum'_n\lambda(A_n).\]
This inequality implies that, for at least one integer $n$, it is
\[\left\| f(t_n)-\dfrac{\phi(A_n)}{\lambda(A_n)} \right\|\leq \ve<\ve',\]
but this is impossible, since $\dfrac{\phi(A_n)}{\lambda(A_n)}\in H$ and $t_n\in T_{\ve'}$. This proves that $\lambda(T_{\ve})=0$ for every $\ve>0$, and so $f$ is strongly measurable.
\end{proof}
Summarizing, we get the following conclusion.

\begin{thm}
Assume that $f:[0,1]\to X$ is any mapping. Then $f$ is strongly Birkhoff integrable if and only if it is strongly measurable and Birkhoff integrable.
\end{thm}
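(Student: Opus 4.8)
The plan is to recognize that this final theorem merely collects the three propositions that precede it, so the proof consists of assembling the two implications and checking one compatibility between the two measures that appear in the statements.

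For the necessity direction, suppose $f$ is strongly Birkhoff integrable. Then the two propositions immediately above already give everything: the first shows that $f$ is Birkhoff integrable, and the second shows that $f$ is strongly measurable. Hence nothing further is needed here, and I would simply cite those two results.

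For the sufficiency direction, suppose $f$ is strongly measurable and Birkhoff integrable, and let $\phi$ denote its Birkhoff integral measure. First I would recall that Birkhoff integrability forces Pettis integrability, so by \cite[Theorem 4.1]{ckr2009} the set function $\phi$ is countably additive on $\mathcal{L}$; being bounded, it admits a control measure by the Bartle--Dunford--Schwartz theorem and is therefore $\lambda$-continuous. This supplies exactly the $\sigma$-additive $\lambda$-continuous measure $\phi:\mathcal{L}\to X$ demanded by Definition \ref{sBi}. Next I would invoke Proposition \ref{trecinque}: for each $\ve>0$ it produces a countable partition $(A_h)_h$ of $[0,1]$ on whose pieces $f$ is Bochner integrable, and along every refinement $\{A'_k\}_k$ inequality (\ref{flaVB}) holds for every choice of tags. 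The single point that must be verified is that the Bochner integral $\int_{A'_k}f$ occurring in (\ref{flaVB}) coincides with $\phi(A'_k)$.

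This identification is the only substantive step, and it is the crux of the argument. It holds because on any measurable set where $f$ is Bochner integrable its Bochner integral agrees with its Pettis integral, and the Pettis integral of $f$ is precisely the restriction of $\phi$; thus $\int_{A'_k}f=\phi(A'_k)$ for each $k$. Substituting $\phi(A'_k)$ for $\int_{A'_k}f$ transforms (\ref{flaVB}) verbatim into the defining inequality of strong Birkhoff integrability, so $f$ is strongly Birkhoff integrable. I expect no genuine obstacle: every ingredient has been established earlier, and once the coincidence of the two integrals on the pieces $A'_k$ is recorded, both implications are immediate, and the theorem reads as a summary of the section.
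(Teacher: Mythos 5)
Your proposal is correct and follows essentially the same route as the paper: the necessity direction is exactly the paper's citation of the two preceding propositions, and the sufficiency direction reproduces the paper's observation after Definition \ref{sBi}, namely invoking Proposition \ref{trecinque} with $\phi$ the Birkhoff (hence Pettis, countably additive by \cite[Theorem 4.1]{ckr2009}, and $\lambda$-continuous via Bartle--Dunford--Schwartz) integral, and identifying $\int_{A'_k}f$ with $\phi(A'_k)$ through the coincidence of Bochner and Pettis integrals. Nothing is missing; the identification of the two integrals on the pieces is indeed the only point needing verification, and you handled it as the authors do.
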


\section*{Acknowledgements}
This is a post-peer-review, pre-copyedit version of an article published in Ricerche di Matematica. The final authenticated version is available online at: http://dx.doi.org/10.1007/s11587-018-0376-x.
\small


\begin{thebibliography}{99}
\bibitem{birkhoff}
{  G. Birkhoff}, `Integration of functions with values in a Banach space', {\em Trans. Amer. Math. Soc.} 38, 357-378, (1935).
\bibitem{BCS}
{  A. Boccuto, D. Candeloro, A.R. Sambucini},
`Henstock multivalued integrability in Banach lattices with respect to pointwise non atomic measures',
 {\em Atti Accad. Naz. Lincei Rend. Lincei Mat. Appl.}  26 (4),  363--383, (2015), Doi: 10.4171/RLM/710
\bibitem{BS2011}
{  A. Boccuto,  A. R. Sambucini}, `A note on comparison between Birkhoff and McShane-type integrals for multifunctions',
 {\em Real Anal. Exchange}  37 (2),  315--324,  (2012).
\bibitem{glasgow}
{  B. Bongiorno, L. Di Piazza, K. Musia{\l}}, `Approximation of Banach space valued non-absolutely integrable functions by step functions ', {\em Glasgow Math. J.} 50,  583--593,  (2008).
\bibitem{BDpM2}
{  B. Bongiorno, L. Di Piazza, K. Musia{\l}}, `A variational Henstock integral characterization of the Radon-Nikodym Property',
 {\em Illinois J. Math.} 53 (1),  87--99, (2009).
\bibitem{ccgs}
D. Candeloro, A. Croitoru, A. Gavrilut, A.R. Sambucini, `An extension of the Birkhoff integrability for multifunctions', {\em Mediterranean J. Math.}  (2015) Doi: 10.1007/s00009-015-0639-7.
\bibitem{cdpms2016}
{  D. Candeloro, L. Di Piazza, K. Musial, A.R. Sambucini}, `Gauge integrals and selections of weakly compact valued multifunctions',
  {\em J. Math. Anal. Appl.}  441 (1), 293--308,   (2016), Doi: 10.1016/j.jmaa.2016.04.009.
\bibitem{lms}
{  D. Candeloro, L. Di Piazza, K. Musial, A.R. Sambucini}, `Relations among gauge and Pettis integrals for multifunctions with weakly compact convex values',
  {\em lms}  submitted.
\bibitem{cs2015}
{  D. Candeloro, A.R. Sambucini},  `Comparison between some norm and order gauge integrals in Banach lattices',
 {\em PanAmerican Math. J.}  25 (3) 1--16,  (2015),      arXiv:1503.04968 [math.FA].
\bibitem{ncm}
{  D. Caponetti,  V. Marraffa, K. Naralenkov}, `On the integration of Riemann-measurable vector-valued functions', preprint 2016.
\bibitem{CASCALES2}
{  B. Cascales, J. Rodr\'{i}guez}, `Birkhoff integral for multi-valued functions',
{\em  J. Math. Anal. Appl.} 297 (2), 540--560, (2004).
\bibitem{crbb}
{  B. Cascales, J. Rodr\'{i}guez}, `The Birkhoff integral and the property of Bourgain',
{\em Mathematische Annalen} 331 (2),   259--279, (2005).
 \bibitem{ckr1}
{  C. Cascales, V. Kadets, J. Rodriguez}, `The Pettis integral for multi-valued functions via single-valued ones',
{\em J. Math. Anal. Appl.}  332 (1),    1--10, (2007).
 \bibitem{ckr2009}
{  C. Cascales, V. Kadets, J. Rodriguez}, `Measurable selectors and set-valued Pettis integral
in non-separable Banach spaces', {\em Journal of Functional Analysis},  256,  673--699, (2009).
\bibitem{CV}
{  C. Castaing, M.  Valadier}, \textit{Convex analysis and Measurable multifunctions}, (1977), Lecture Notes Math. (580), Springer-Verlag, Berlin-New York.
\bibitem{du}{  Diestel, J. and Uhl, J. J.},
{\sl Vector measures}, Math. Surveys, {\bf 15} (1977).
\bibitem{DiP}
{  L. Di Piazza}, `Variational measures in the theory of the integration in $R^m$',
{\em Czechoslovak Mathematical Journal}, 51 (1),   95--110, (2001).
\bibitem{dm4}{L. Di Piazza and K. Musia{\l}},   `Set-Valued Henstock-Kurzweil-Pettis
Integral', Set-Valued Analysis 13 (2005), 167-179.
 \bibitem{dm11}
{  L. Di Piazza, K. Musia{\l}},  `A Characterization of variationally  McShane Integrable Banach-Space Valued Functions',
{\em Illinois Journal of Mathematics} 45 (1),  279--289, (2001).
 \bibitem{dm2}
{  L. Di Piazza, K. Musia{\l}}, `A decomposition of Denjoy-Khintchine-Pettis and Henstock-Kurzweil-Pettis  integrable multifunctions',
 Vector Measures, Integration and Related Topics (Eds.) G.P. Curbera, G. Mockenhaupt, W.J. Ricker,
Operator Theory: Advances and Applications Vol. 201  (2010)   pp.  171--182 Birkhauser Verlag.
\bibitem{dm}
{  L. Di Piazza, K. Musia{\l}}, `Relations among Henstock, McShane and Pettis integrals for multifunctions with compact convex values',
{\em Monatsh. Math.} 173 (4),  (2014),  459--470.
\bibitem{dpp}
{  L. Di Piazza, G. Porcello}, `Radon-Nikodym theorems for finitely additive multimeasures',
{\em Z. Anal. ihre. Anwend.(ZAA)}  34 (4),  (2015), 373--389 DOI: 10.4171/ZAA/1545
\bibitem{f1994a}
{  D. H. Fremlin}, `The Henstock and McShane integrals of vector-valued functions',
{\em Illinois J. Math.} 38 (3), (1994), 471--479.
\bibitem{fm}
{  D. H. Fremlin,  J.  Mendoza}, `On  the integration of vector-valued functions',
 {\em  Illinois J. Math.}  38, (1994),127--147.
\bibitem{gm}
{ J. L. G\'amez,  J.  Mendoza}, `On  Denjoy-Dunford and Denjoy-Pettis integrals', {\em  Studia Math.}  130 (2), (1998),115--133.
\bibitem{Gor}
{  R. A. Gordon}, `The Integrals of Lebesgue, Denjoy, Perron and Henstock', Grad. Stud. Math. 4, AMS, Providence (1994).
\bibitem{hp}
{  S. Hu,  N. S.  Papageorgiou},  Handbook of Multivalued Analysis I and II, (1997)
Mathematics and Its Applications, (419) , Kluwer Academic Publisher, Dordrecht.
\bibitem{lm}
{C. C. A. Labuschagne and V. Marraffa},  `Operator martingale decompositions and the Radon-Nikodym property in Banach spaces',  {\em J. Math. Anal. Appl.}, vol. 363, n. 2, (2010), 357-365, doi:10.1016/j.jmaa.2009.08.054 

\bibitem{l1}
{  C. C. A. Labuschagne,  A. L. Pinchuck , C. J. van Alten},  `A vector lattice version of R{\aa}dstr\"{o}m's embedding theorem',
 {\em Quaest. Math.} 30 (3), (2007), 285--308.
 
 \bibitem{ma}{ V. Marraffa}, `The Birkhoff integral and the Bourgain property in a locally convex space', {\em Real analysis Exchange}, Vol. 32(2), (2006/07), 1-19.
  
 \bibitem{mu}
{  K. Musia\l},   `Topics in the theory of Pettis integration',
{\em  Rend. Istit. Mat. Univ. Trieste}  23, (1991), 177--262.
\bibitem{mu3}
{  K. Musia\l},   `Pettis Integral, Handbook of Measure Theory I',
 North-Holland, Amsterdam, 2002, 531--586.
\bibitem{mu4}
{   K. Musia\l}, `Pettis integrability of multifunctions with values in arbitrary Banach spaces',
{\em J. Convex Anal.}  18 (3), (2011), 769--810.
\bibitem{mu8}
{  K. Musia\l}, `Approximation of Pettis integrable multifunctions with values in arbitrary Banach spaces',
{\em J. Convex Analysis} 20 (3), (2013), 833--870.
\bibitem{nara}
{  K. M. Naralenkov},`A Lusin type measurability property for vector-valued functions',
 {\em J. Math. Anal. Appl.}  417 (1),  (2014), 293--307, doi: 10.1016/j.jmaa.2014.03.029
\bibitem{pettis}
{  B. J. Pettis}, `On integration in vector spaces',
{\em Trans. Amer. Math. Soc.} 44 (2),  (1938), 277--304.
\bibitem{porcello}
{  G. Porcello}, `Multimeasures and integration of multifunctions in Banach spaces', {\em Dottorato di Ricerca in Matematica e Informatica XXIV ciclo, University of Palermo (Italy)}\\
 https://iris.unipa.it/retrieve/handle/10447/91026/99048/Tesi Dottorato Giovanni Porcello.pdf.
\bibitem{mpot} M. Potyrala, \textit{ The Birkhoff and Variationally McShane integrals of vector valued functions},
 Folia Mathematica Acta Universitatis Lodziensis, {\bf 13} (1), (2006)   31--39
\bibitem{mpoty} M. Potyrala, \textit{ Some remarks about Birkhoff and Riemann-Lebesgue integrabilityof vector valued functions}, Tatra Mt. Math. Publ. {\bf 35} (2007)  97--106
\end{thebibliography}
\end{document}